\documentclass{amsart}
\usepackage{amsmath}
\usepackage{graphicx}
\usepackage{bm,color}

\title
[Unbounded generalization ...]
{Unbounded generalization of logarithmic representation of infinitesimal generators}

\author{Yoritaka Iwata}

\address[Y. Iwata]{Kansai University, Japan}
\curraddr{Kansai University, Osaka 564-8680, Japan}
\email{iwata$\_$phys@08.alumni.u-tokyo.ac.jp}

\thanks{The author is grateful to Prof. Emeritus Dr. Hiroki Tanabe for fruitful comments.
This work was supported by JSPS KAKENHI Grant No. 17K05440.}
\keywords{Nonlinear equation.}
\subjclass[2000]{46B99}

\theoremstyle{plain}
\newtheorem{theorem}{Theorem}
\newtheorem{corollary}{Corollary}
\newtheorem{lemma}{Lemma}

\begin{document}

\begin{abstract}
The logarithmic representation of infinitesimal generators is generalized to the cases when the evolution operator is unbounded. 
The generalized result is applicable to the representation of infinitesimal generators of unbounded evolution operators, where unboundedness of evolution operator is an essential ingredient of nonlinear analysis.
In conclusion a general framework for the identification between the infinitesimal generators with evolution operators is established.
A mathematical framework for such an identification is indispensable to the rigorous treatment of nonlinear transforms: e.g., transforms appearing in the theory of integrable systems. 
\end{abstract}

\maketitle

\keywords{Keywords: logarithm of operators, resolvent operator, Dunford-Taylor integral \\}

\section{Introduction}
The theory of analytic semigroup~\cite{61kato,60tanabe,61tanabe} provides an integral representation for a certain class of semigroups of operators.
Theory of analytic semigroup actually clarifies the regularity of solutions for evolution equations of parabolic type.
In the theory of analytic semigroup of operators, the exponential functions of unbounded operators are defined by means of the Dunford-Taylor integral~\cite{51taylor}, where the Riesz-Dunford integral~\cite{43dunford} and the Dunford-Taylor integral are the generalized concepts of the Cauchy integral in the complex analysis.
In the preceding work~\cite{17iwata-1}, the logarithmic function of bounded operators are shown to be well-defined, while the theory for defining the logarithmic function of unbounded operators has not been established yet, except for the logarithms of sectorial operators \cite{94boyadzhiev, 01carracedo, 03hasse, 06hasse, 69nollau, 00okazawa-1, 00okazawa-2}.
Note that the evolution operator generated for the free wave equation is a non-sectorial operator (cf. hyperbolic partial differential equations).   

The aim of this paper is to generalize the logarithmic representation of infinitesimal generators by defining the logarithmic functions of unbounded operators without assuming the sectorial property of the evolution operators.
Although the boundedness of evolution operator is assumed in the standard theory of abstract evolution equation, this restriction prevents us to have access to the abstract and general treatments of nonlinear transforms in which the evolution operators (i.e. solutions) are often identified with the infinitesimal generators of another equations.
For the applicable example of nonlinear transforms, the Cole-Hopf transform  \cite{19iwata-2} and the Miura transform  \cite{20iwata-1} are good examples.
In particular, Burgers equations, KdV equations, and mKdV equations are associated with those transforms (see \cite{88marchenko} for nonlinear equations in soliton theory). 
In conclusion the unbounded generalization of the logarithmic representation is presented by means of the doubly-implemented resolvent representation.

\section{Mathematical settings}
Let $X$ be a Banach space, and $t$ and $s$ be real numbers included in a finite interval $[0, T]$.
Let $U(t,s)$ be the evolution operator in $X$. 
Two parameter semigroup $U(t,s)$, which is continuous with respect to both parameters $t$ and $s$, is assumed to be a closed operator in $X$, and not necessarily a bounded operator on $X$.
That is, the boundedness condition 
\begin{equation} \label{bound} \begin{array}{ll}
\| U(t,s) \| \le M e^{\omega (t-s)},
\end{array} \end{equation}
is not assumed.
Furthermore the resolvent set of $U(t,s)$ is assumed to be non-empty.
Following the standard theory of abstract evolution equations~\cite{70kato,73kato,79tanabe} the semigroup property:
\[ \begin{array}{ll}
 U(t,r) U(r,s) = U (t,s)  
\end{array} \]
is assumed to be satisfied for arbitrary $s < r < t$ included in a finite interval $[0, T]$.
For any $t, r ,s \in [0, T]$ satisfying $s \le r \le t$, the domain space of $U(t,s)$ is assumed to be
\[ 
D(U(t,s)) = \{ x \in D(U(r,s)); ~  U(r,s) x  \in  D(U(t,r))  \}.
\]
Compared to the standard treatment, this condition is additionally assumed.
All the above conditions are weaker than those for the preceding work dealing with the logarithmic representation of infinitesimal generator of bounded $U(t,s)$, as the domain space is fixed to be $X$ in case of bounded $U(t,s)$~\cite{17iwata-1}.
Note that $U(t,s)$ can be either linear or nonlinear semigroup.

\section{Main result}
\subsection{Logarithm of unbounded two-parameter semigroup}
Let $X$ and $U(t,s)$ be a Banach space and a two-parameter semigroup respectively, as defined in the previous section.
In particular, $U(t,s)$ with its non-empty resolvent set is assumed to be only a closed operator but not a bounded operator.
For the definition of the logarithm of $U(t,s)$, the resolvent operator of $U(t,s)$
\begin{equation} \label{resolve} 
I_{\eta}(t,s) := ( I - \eta^{-1} U(t,s))^{-1}
\end{equation}
is utilized.
Here $\eta \in {\mathbf C}$ is assumed to be included in the resolvent set of $U(t,s)$, so that $I_{\eta}(t,s)$ is necessarily bounded on $X$.
It also follows according to the validity of
\begin{equation} \label{resolapp} \begin{array}{ll}
 I_{\eta}(t,s)  - I 
 =  ( I -I_{\eta}(t,s) ^{-1} ) I_{\eta}(t,s)  
 =  \eta^{-1} U(t,s)  I_{\eta}(t,s) 
\end{array} \end{equation}
that the product of operator $U(t,s)  I_{\eta}(t,s)$ is bounded on $X$. 
The operator $U(t,s)  I_{\eta}(t,s)$ is regarded as the resolvent approximation of $U(t,s)$, so that any difference between a certain resolvent operator and an identity operator can be regarded as the resolvent approximation (df. Yosida approximation).    
In the following the resolvent operator $I_{\eta}(t,s) $ is denoted simply by $I_{\eta}$, if there is no ambiguity.
Let ${\rm Log}$ be a principal branch of logarithm being defined by
\[ {\rm Log} z = \log |z| + i {\rm arg} Z, \]
where $Z$ is a complex number chosen to satisfy $|Z|=|z|$, $-\pi < {\rm arg} Z \le \pi$.
Formal calculation of logarithm leads to the definition of ${\rm Log} U(t,s)$
\begin{equation} \begin{array}{ll} \label{logddef}
{\rm Log} U(t,s) : =   {\rm Log} [ U(t,s) I_{\eta} ]  -  {\rm Log} [I_{\eta}] \vspace{2.5mm} \\
= \int_{\Gamma_1} {\rm Log} \lambda (\lambda I - U(t,s) I_{\eta})^{-1} d \lambda  
 - \int_{\Gamma_2} {\rm Log} \lambda (\lambda I - I_{\eta})^{-1} d \lambda, 
\end{array} \end{equation}
where the relation $U(t,s) = U(t,s) I_{\eta}  I_{\eta}^{-1} $ is adopted for the logarithmic rule.
The right hand side is always regarded as the Riesz-Dunford integral if it is possible to draw integral paths $\Gamma_1$ and $\Gamma_2$ excluding the origin and including the spectral sets of $ U(t,s) I_{\eta} $ and $I_{\eta}$ respectively.
It is worth noting here that $I_{\eta}^{-1}=I - \eta^{-1} U(t,s)$ is bounded if $\Gamma_2$ excludes the origin.
This fact leads to the contradicting situation that  Eq.~(\ref{logddef}) cannot be valid to define the logarithmic operator of unbounded $U(t,s)$. 
The essentially similar statement is true for $\Gamma_1$, as $U(t,s) I_\eta = \eta(I_{\eta}-I)$ is valid.
This contradicting situation is essentially settled by introducing the differential operator.
In order to obtain the logarithmic representation of infinitesimal generators of unbounded $U(t,s)$, it is useful to deal with the $t$-differential of logarithm.
Using Eq.~(\ref{resolapp}),
\begin{equation} \label{proposedrep} \begin{array}{ll} 
\partial_t  {\rm Log} U(t,s)  =  \partial_t  {\rm Log} [ \eta ( I_{\eta}  - I ) ]  - \partial_t   {\rm Log} [ I_{\eta}]  \vspace{2.5mm} \\
\qquad =  (I + \nu  \eta^{-1} ( I_{\eta}  - I )^{-1} ) \partial_t  {\rm Log} [ \eta  I_{\eta}  + (\nu -\eta) I ) ] 
- (I + \nu  I_{\eta}^{-1}  ) \partial_t   {\rm Log} [ I_{\eta}  + \nu  I ]
\end{array} \end{equation} 
follows, where $\nu$ is a certain complex number and $t$-differential is defined using a kind of weak topology (i.e. the locally-strong topology defined in \cite{20iwata-3}).
Here two different resolvent operators with parameters $\eta$ and $\nu$ are doubly implemented.
It is notable here that there is no need to take the limit such as $\eta \to \infty$ and/or  $\nu \to \infty$. 
It realizes the pure Riesz-Dunford treatment of logarithm of unbounded operators; the right hand side of Eq.~(\ref{proposedrep}) consists of the bounded type of logarithmic representation.
Since it is generally difficult to define $\partial_t {\rm Log} [ \eta ( I_{\eta}  - I ) ] $ and $\partial_t  {\rm Log}  [I_{\eta}]$ due to the absence of possible existence of integral paths, both operators are defined by introducing a translation $+\nu I$ on the complex plane. 
\begin{equation} \label{detail} \begin{array}{ll}
\partial_t {\rm Log} [ \eta ( I_{\eta}  - I ) ] 
:=  (I + \nu  \eta^{-1} ( I_{\eta}  - I )^{-1} ) \partial_t {\rm Log} [ \eta  I_{\eta}  + (\nu - \eta) I ) ]       \vspace{1.5mm} \\ 
\quad = (I + \nu  \eta^{-1} ( I_{\eta}  - I )^{-1} ) \partial_t \int_{\Gamma_3} {\rm Log} \lambda (\lambda I -  \eta  I_{\eta}  - (\nu - \eta) I )  )^{-1} d \lambda  ,     \vspace{2.5mm} \\
\partial_t  {\rm Log}  [I_{\eta}] 
:= (I + \nu  I_{\eta}^{-1}  )  \partial_t {\rm Log} [I_{\eta}  + \nu  I ]  \vspace{1.5mm} \\ 
\quad = (I + \nu  I_{\eta}^{-1}  )  \partial_t  \int_{\Gamma_4} {\rm Log} \lambda (\lambda I -  I_{\eta}  - \nu  I)  )^{-1} d \lambda,
\end{array} \end{equation}
where the right hand sides can be defined similar to the preceding work~\cite{17iwata-1}; i.e., the translation of spectral set in the complex plain.
Indeed, according to the boundedness of $ U(t,s) I_{\eta} = \eta ( I_{\eta}  - I )$ and $I_{\eta}$ for a certain $\eta$, there always exist certain $\nu$ for integral paths $\Gamma_3$  (parameterized by $\lambda$) and $\Gamma_4$ (parameterized by $\lambda$) excluding the origin and including the spectral sets of $ \eta  I_{\eta}  + (\nu -\eta) I $ and $I_{\eta} + \nu I$ respectively.
Regardless of the choice of $\eta$, the above two logarithms of operators in the right hand side are always well-defined by choosing appropriate $\nu$ with sufficiently-large $|\nu|$.
Since for $(I_{\eta}- I)^{-1}$ and $I_{\eta}^{-1}$ in Eqs. (\ref{proposedrep}) and (\ref{detail}), the resulting representation (\ref{logthm6}) in Lemma~\ref{thm1} is an intermediate representation, which is further modified to be fully bounded representation shown in Theorem~\ref{lem1} and Corollary~\ref{lem2}. 
In this way the logarithmic representation of infinitesimal generators is generalized step by step.
Since the representation using Riesz-Dunford integral is associated with the operators included in the $B(X)$-module \cite{18iwata, 20iwata-2}, here is an advantage of the proposed method only using the Riesz-Dunford integral representation.

\begin{lemma} \label{thm1}
Let $t$ and $s$ satisfy $0 \le s \le t \le T$, and $Y$ be dense subspace of $X$.
Let $U(t,s)$ be a continuous semigroup satisfying
\[ \begin{array}{ll}
 U(t,r) U(r,s) = U (t,s)  
\end{array} \]
for any $s < r < t$ included in a finite interval $[0, T]$
with its domain space
\[ 
D(U(t,s)) = \{ x \in D(U(r,s)); ~  U(r,s) x  \in  D(U(t,r))  \}.
\]
where  $D(U(t,s))$ is assumed to be a dense subspace of $X$.
Let $U(t,s)$ and its infinitesimal generator $A(t)$ be assumed to commute.
Furthermore $U(t,s)$ is assumed to be invertible (i.e., $U(s,t)= U(t,s)^{-1}$ exists), and the inverse of $\eta(I_{\eta}(t,s) - I)$ are assumed to exist as bounded on $X$.
For a given two-parameter closed operator $U(t,s): D(U) \to X$, its infinitesimal generator $A(t):Y \to X$ is represented by
\begin{equation} \label{logthm6} \begin{array}{ll}
A(t)  =  (I + \nu  \eta^{-1} ( I_{\eta}(t,s)  - I )^{-1} ) \partial_t    
 {\rm Log} [ I_{\eta}(t,s)  + \frac{\nu - \eta}{\eta} I  ]   \vspace{1.5mm} \\
- (I + \nu  I_{\eta}(t,s)^{-1}  )  \partial_t {\rm Log} [ I_{\eta} (t,s)  + \nu  I ],  
\end{array} \end{equation}
where $\nu$ and $\eta$ are certain complex numbers.
\end{lemma}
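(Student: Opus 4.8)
The plan is to show that the operator on the right-hand side of Eq.~(\ref{logthm6})---which by construction is assembled only from bounded Riesz-Dunford logarithms and their $t$-derivatives in the locally-strong topology of \cite{20iwata-3}---unfolds to $(\partial_t U(t,s))\,U(t,s)^{-1}$, and that the latter equals the infinitesimal generator $A(t)$. It should be kept in mind that ${\rm Log}\,U(t,s)$ is not itself available as an operator when $U(t,s)$ is unbounded; its $t$-derivative is \emph{defined} by Eqs.~(\ref{proposedrep})--(\ref{detail}), so the content of the lemma is exactly that this defined object coincides with $A(t)$. I would split the argument into a purely algebraic core, valid for mutually commuting operators, followed by the verification that every step of that core is legitimate in the present unbounded, weak-topology setting.

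For the algebraic core I would first record the translation identity: if $B(t)$ is a closed operator commuting with $\partial_t B(t)$ and with bounded inverse, then $\partial_t{\rm Log}\,B = (I + \nu B^{-1})\,\partial_t{\rm Log}(B + \nu I)$, which follows from $\partial_t{\rm Log}(B+\nu I) = (\partial_t B)(B+\nu I)^{-1}$ together with $I + \nu B^{-1} = B^{-1}(B + \nu I)$ and commutativity. By the discussion preceding the lemma, for $|\nu|$ sufficiently large the spectra of $\eta I_{\eta} + (\nu-\eta)I$ and of $I_{\eta} + \nu I$ lie in a compact set bounded away from the origin, so the integrals along $\Gamma_3$ and $\Gamma_4$ in Eq.~(\ref{detail}) are genuine Riesz-Dunford integrals and $\partial_t$ may be carried under the integral sign (the integrand depending on $t$ only through the norm-differentiable resolvent). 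Applying the translation identity with $B = \eta(I_{\eta} - I) = U(t,s)I_{\eta}$ and with $B = I_{\eta}$, and using $\partial_t{\rm Log}[\eta C] = (\partial_t(\eta C))(\eta C)^{-1} = (\partial_t C)C^{-1} = \partial_t{\rm Log}\,C$ to strip the scalar $\eta$ from the first argument, I would identify the right-hand side of Eq.~(\ref{logthm6}) with $\partial_t{\rm Log}[U(t,s)I_{\eta}] - \partial_t{\rm Log}[I_{\eta}]$, i.e.\ with $\partial_t{\rm Log}\,U(t,s)$ in the sense of Eq.~(\ref{logddef}).

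Next I would evaluate $\partial_t{\rm Log}[U(t,s)I_{\eta}] - \partial_t{\rm Log}[I_{\eta}]$ directly. Since $U(t,s)$, $I_{\eta}(t,s)$ and $A(t)$ are all functions of $U(t,s)$ and hence mutually commute, $\partial_t{\rm Log}[U I_{\eta}] = (\partial_t(U I_{\eta}))(U I_{\eta})^{-1} = [(\partial_t U)I_{\eta} + U(\partial_t I_{\eta})]I_{\eta}^{-1}U^{-1} = (\partial_t U)U^{-1} + (\partial_t I_{\eta})I_{\eta}^{-1}$, where the cross term $(\partial_t I_{\eta})I_{\eta}^{-1} = \partial_t{\rm Log}[I_{\eta}]$ exactly cancels the second logarithm. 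This leaves $(\partial_t U(t,s))\,U(t,s)^{-1}$, and the relation $\partial_t U(t,s) = A(t)U(t,s)$ defining the infinitesimal generator (read in the locally-strong topology), together with commutativity and the assumed invertibility of $U(t,s)$, gives $(\partial_t U(t,s))U(t,s)^{-1} = A(t)$, which is Eq.~(\ref{logthm6}). The density of $Y$ and of $D(U(t,s))$ is what permits this to be asserted as an equality of operators on a dense subspace, in parallel with the bounded case \cite{17iwata-1}.

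The hard part will not be the algebra but the justification of the steps in the unbounded setting: one must check that the $t$-differentiations, the logarithmic product rule relating $U(t,s)I_{\eta}$ (bounded) and $I_{\eta}^{-1} = I - \eta^{-1}U(t,s)$ (unbounded), and the interchange of $\partial_t$ with the Dunford integral are all valid in the locally-strong topology, and---most delicately---that the ranges of $\partial_t{\rm Log}[I_{\eta} + \tfrac{\nu-\eta}{\eta}I]$ and $\partial_t{\rm Log}[I_{\eta} + \nu I]$ lie in the domains of the prefactors $(I_{\eta} - I)^{-1}$ and $I_{\eta}^{-1}$ respectively, so that the composition in Eq.~(\ref{logthm6}) is meaningful on $Y$. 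This domain matching is exactly why the hypothesis that $\eta(I_{\eta}(t,s) - I)$ be boundedly invertible is imposed, and why the representation is still only an intermediate one, the factor $I + \nu I_{\eta}^{-1}$ remaining unbounded. A secondary point is ensuring that the principal-branch choice is consistent across the three logarithms, so that the additive decomposition~(\ref{logddef}), and not merely its $t$-derivative, is unambiguous wherever it is invoked.
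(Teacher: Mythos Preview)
Your proposal is correct and follows essentially the same route as the paper: both arguments rest on the factorization $U(t,s)=\eta(I_\eta-I)\cdot I_\eta^{-1}$, the splitting $\partial_t{\rm Log}\,U=\partial_t{\rm Log}[\eta(I_\eta-I)]-\partial_t{\rm Log}[I_\eta]$, and then the bounded-case translation identity of \cite{17iwata-1} applied to each piece, with the scalar $\eta$ stripped from the first logarithm. The only cosmetic difference is direction---the paper starts from $A(t)$ and computes forward to Eq.~(\ref{logthm6}), whereas you start from the right-hand side and unwind it back to $(\partial_t U)U^{-1}=A(t)$---and you supply more explicit justification (product rule, domain matching, interchange of $\partial_t$ with the Dunford integral) than the paper, which simply invokes \cite{17iwata-1} for those steps.
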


\begin{proof}
The logarithmic relation with $U(t,s) = U(t,s) I_{\eta}  I_{\eta}^{-1} =  \eta ( I_{\eta}  - I )  I_{\eta}^{-1} $ lead to
\[ \begin{array}{ll}
 A(t) =  \partial_t {\rm Log}  [ \eta ( I_{\eta}  - I )  I_{\eta}^{-1} ]  
       =  \partial_t {\rm Log} [ \eta ( I_{\eta}  - I ) ]  
- \partial_t  {\rm Log}  [I_{\eta}],  \vspace{1.5mm} \\
\end{array} \]
where all the logarithms in this equality are always well-defined by choosing $\eta$ property, although ${\rm Log}U(t,s)$ is not necessarily well-defined.
The well-definedness follows from the boundedness of $\eta(I_{\eta}-I)$ and $I_{\eta}$.
Using the logarithmic representation for bounded evolution operator~\cite{17iwata-1}, it is further calculated as
\[ \begin{array}{ll}
 A(t) =  (I + \nu  \eta^{-1} ( I_{\eta}  - I )^{-1} ) \partial_t {\rm Log} [ \eta  I_{\eta}  + (\nu - \eta) I ) ]   
- (I + \nu  I_{\eta}^{-1}  )  \partial_t {\rm Log} [ I_{\eta}  + \nu  I ], \vspace{1.5mm} \\
\quad =  (I + \nu  \eta^{-1} ( I_{\eta}  - I )^{-1} ) \partial_t {\rm Log} [ \eta ( I_{\eta}  + \frac{\nu - \eta}{\eta} I ) ]   
- (I + \nu  I_{\eta}^{-1}  )  \partial_t {\rm Log} [ I_{\eta}  + \nu  I ] \vspace{1.5mm} \\
\quad =  (I + \nu  \eta^{-1} ( I_{\eta}  - I )^{-1} ) \partial_t \left\{ {\rm Log}  [\eta I ]   
+ {\rm Log} [ I_{\eta}  + \frac{\nu - \eta}{\eta} I  ]    \right\}  
- (I + \nu  I_{\eta}^{-1}  )  \partial_t {\rm Log} [ I_{\eta}  + \nu  I ]  \vspace{1.5mm} \\
\quad =  (I + \nu  \eta^{-1} ( I_{\eta}  - I )^{-1} ) \partial_t    
 {\rm Log} [ I_{\eta}  + \frac{\nu - \eta}{\eta} I  ]
- (I + \nu  I_{\eta}^{-1}  )  \partial_t {\rm Log} [ I_{\eta}  + \nu  I ]  ,
\end{array} \]
where $I_{\eta}=I_{\eta}(t,s)$ is a function of $t$ and $s$.
\end{proof}

Here $\nu$ is taken to be sufficiently large to be included in the resolvent set of $I_{\eta}$ and $\eta (I_{\eta} - I)$.  
The operator $ (I + \nu  \eta^{-1} ( I_{\eta}  - I )^{-1} )$ is bounded, since $\eta( I_{\eta}  - I )$ is assumed to be bounded.
The operator $I + \nu I_{\eta}^{-1} = I_{\eta}^{-1} (I_{\eta} + \nu I)$ is bounded, since $\eta$ is included in the resolvent set of $U(t,s)$.
The boundedness for $\eta (I_{\eta} (t,s) -I)$ leads to the boundedness of $(I_{\eta}-I)^{-1}$; indeed, 
\[ \begin{array}{ll}
(I_{\eta} - I)^{-1} = \eta (I - \eta^{-1} U(t,s)) U(t,s)^{-1} \vspace{1.5mm} \\
= \eta  U(t,s)^{-1} -  U(t,s) U(t,s)^{-1}  \vspace{1.5mm} \\
= \eta  U(t,s)^{-1} - I |_{D(U(t,s)^{-1})}  \vspace{1.5mm} \\
= \eta  U(t,s)^{-1} - I |_{R(U(t,s))}.  
\end{array} \]
Note that the boundedness for $\eta (I_{\eta} (t,s) -I)$ is not removable, if the boundedness of $ (I + \nu  \eta^{-1} ( I_{\eta}  - I )^{-1} )$ is not satisfied.
The right hand side of Eq.~(\ref{logthm6}) plays a role of pre-infinitesimal generator being defined in Ref.~\cite{17iwata-1}.
 \\

\begin{corollary} \label{cor1}
Let $t$ and $s$ satisfy $0 \le s \le t \le T$, and $Y$ be dense subspace of $X$.
Let $U(t,s)$ be a continuous semigroup satisfying
\[ \begin{array}{ll}
 U(t,r) U(r,s) = U (t,s)  
\end{array} \]
for any $s < r < t$ included in a finite interval $[0, T]$
with its domain space
\[ 
D(U(t,s)) = \{ x \in D(U(r,s)); ~  U(r,s) x  \in  D(U(t,r))  \}.
\]
where  $D(U(t,s))$ is assumed to be a dense subspace of $X$.
Let $U(t,s)$ and its infinitesimal generator $A(t)$ be assumed to commute.
Furthermore $U(t,s)$ is assumed to be invertible, and the inverse of $\eta(I_{\eta}(t,s) - I)$ are assumed to exist as bounded on $X$.
In particular, $\eta/(1-\eta)$ is assumed to be included in the resolvent set of $I_{\eta}$ and $\eta (I_{\eta} - I)$.
For a given two-parameter closed operator $U(t,s): D(U) \to X$, its infinitesimal generator $A(t):Y \to X$ is represented by
\begin{equation} \label{logthm2} \begin{array}{ll}
A(t)  =  ( I_{\eta}(t,s)^2  -  I_{\eta}(t,s) )^{-1}   ( I_{\eta}(t,s)  + \nu I  )  
  \partial_t {\rm Log} [ I_{\eta}(t,s)  + \nu  I ],
\end{array} \end{equation}
where $\nu$ and $\eta$ are certain complex numbers satisfying 
 \begin{equation} \label{equival}    \nu   = \frac {\eta}{1  - \eta}.  \end{equation}

\end{corollary}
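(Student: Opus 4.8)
The plan is to obtain (\ref{logthm2}) directly from the representation (\ref{logthm6}) of Lemma~\ref{thm1} by specializing the free parameter to $\nu=\eta/(1-\eta)$. First I would insert this relation into the coefficient $\frac{\nu-\eta}{\eta}$ that appears inside the first logarithm of (\ref{logthm6}). A one-line computation gives $\nu-\eta=\frac{\eta}{1-\eta}-\eta=\frac{\eta^{2}}{1-\eta}$, so that $\frac{\nu-\eta}{\eta}=\frac{\eta}{1-\eta}=\nu$. Hence, under (\ref{equival}), the two logarithmic operators on the right-hand side of (\ref{logthm6}) both coincide with ${\rm Log}[\,I_{\eta}(t,s)+\nu I\,]$, and therefore so do their $t$-derivatives.

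Next, since in (\ref{logthm6}) the bounded coefficient operators $I+\nu\eta^{-1}(I_{\eta}(t,s)-I)^{-1}$ and $I+\nu I_{\eta}(t,s)^{-1}$ already stand outside $\partial_t$, I would factor the now-common factor $\partial_t{\rm Log}[\,I_{\eta}(t,s)+\nu I\,]$ to the right and subtract the two coefficients, which leaves
\[
A(t)=\bigl(\nu\eta^{-1}(I_{\eta}(t,s)-I)^{-1}-\nu I_{\eta}(t,s)^{-1}\bigr)\,\partial_t{\rm Log}[\,I_{\eta}(t,s)+\nu I\,].
\]
Every operator in the bracket is a bounded rational function of the single bounded operator $I_{\eta}(t,s)$, hence they mutually commute and the bracket may be manipulated exactly as if it were scalar: writing it over the common factor $(I_{\eta}(t,s)-I)^{-1}I_{\eta}(t,s)^{-1}=(I_{\eta}(t,s)^{2}-I_{\eta}(t,s))^{-1}$ produces $(I_{\eta}(t,s)^{2}-I_{\eta}(t,s))^{-1}\bigl(\nu(\eta^{-1}-1)I_{\eta}(t,s)+\nu I\bigr)$, and since $\nu(\eta^{-1}-1)=\frac{\eta}{1-\eta}\cdot\frac{1-\eta}{\eta}=1$ by (\ref{equival}), this equals $(I_{\eta}(t,s)^{2}-I_{\eta}(t,s))^{-1}(I_{\eta}(t,s)+\nu I)$, which is the coefficient in (\ref{logthm2}).

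It then remains to record that all the objects involved are legitimate. The operator $I_{\eta}(t,s)^{-1}=I-\eta^{-1}U(t,s)$ is bounded because $\eta$ lies in the resolvent set of $U(t,s)$; $(I_{\eta}(t,s)-I)^{-1}$ is bounded by the standing hypothesis that $\eta(I_{\eta}(t,s)-I)$ has a bounded inverse (as spelled out just after Lemma~\ref{thm1}); hence $(I_{\eta}(t,s)^{2}-I_{\eta}(t,s))^{-1}=(I_{\eta}(t,s)-I)^{-1}I_{\eta}(t,s)^{-1}$ is bounded; and the extra assumption that $\eta/(1-\eta)$ belongs to the resolvent sets of $I_{\eta}(t,s)$ and $\eta(I_{\eta}(t,s)-I)$ is exactly what makes ${\rm Log}[\,I_{\eta}(t,s)+\nu I\,]$ representable by a Riesz--Dunford integral along an admissible path $\Gamma_{4}$, just as in Lemma~\ref{thm1} and \cite{17iwata-1}. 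The step that might look delicate, namely commuting the bounded but $t$-dependent coefficient operators past $\partial_t$, is in fact sidestepped because (\ref{logthm6}) already carries those coefficients outside the derivative; so the whole proof reduces to the two elementary scalar identities $\frac{\nu-\eta}{\eta}=\nu$ and $\nu(\eta^{-1}-1)=1$ together with the check that, under (\ref{equival}) (which in particular forces $\eta\neq0$ and $\eta\neq1$ so that $\nu$ is defined), no inverse of a non-invertible operator has been formed. That verification of invertibility is the only place demanding any care.
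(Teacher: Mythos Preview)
Your argument is correct and follows essentially the same route as the paper: invoke Lemma~\ref{thm1}, specialize to $\nu=\eta/(1-\eta)$ so that $\tfrac{\nu-\eta}{\eta}=\nu$ and the two logarithmic terms coincide, subtract the two coefficient operators, and reduce the difference algebraically to $(I_{\eta}^{2}-I_{\eta})^{-1}(I_{\eta}+\nu I)$ using $\nu(\eta^{-1}-1)=1$. One small slip in your closing checks: $I_{\eta}(t,s)^{-1}=I-\eta^{-1}U(t,s)$ is \emph{not} bounded when $U(t,s)$ is unbounded (membership of $\eta$ in the resolvent set gives boundedness of $I_{\eta}$, not of its inverse), so you should not assert that $(I_{\eta}^{2}-I_{\eta})^{-1}$ is bounded---but this does not affect the derivation of~(\ref{logthm2}).
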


\begin{proof}
According to Lemma~\ref{thm1},
\[ \begin{array}{ll}
 A(t) =  (I + \nu  \eta^{-1} ( I_{\eta}  - I )^{-1} ) \partial_t    
 {\rm Log} [ I_{\eta}  + \frac{\nu - \eta}{\eta} I  ]
- (I + \nu  I_{\eta}^{-1}  )  \partial_t {\rm Log} [ I_{\eta}  + \nu  I ] 
\end{array} \]
is valid under the assumption, where $I_{\eta}=I_{\eta}(t,s)$ is a function of $t$ and $s$.
According to the assumption, $\nu$ is possible to be taken as $\nu   = \eta/(1  - \eta)$. 
The logarithms in the right hand side is always well-defined for such $\nu$. 
It follows that
\[ \begin{array}{ll}
 A(t) =  [ (I + \nu  \eta^{-1} ( I_{\eta}  - I )^{-1} ) - (I + \nu  I_{\eta}^{-1}  ) ] 
  \partial_t {\rm Log} [ I_{\eta}  + \nu  I ] \vspace{1.5mm} \\
   =  [  I_{\eta}  + \eta (  1 -  \eta)^{-1} I  ] \eta^{-1} (  1 -  \eta) \nu  I_{\eta}^{-1}  ( I_{\eta}  - I )^{-1} 
  \partial_t {\rm Log} [ I_{\eta}  + \nu  I ],
\end{array} \]
so that the logarithmic representation is obtained as
\[ \begin{array}{ll}
 A(t)  
 =     ( I_{\eta}^2  -  I_{\eta} )^{-1}   ( I_{\eta}  + \nu I  )  
  \partial_t {\rm Log} [ I_{\eta}  + \nu  I ],
\end{array} \]
where Eq.~(\ref{equival}) is utilized.  

For the existence of 
\begin{equation}  \begin{array}{ll}
( I_{\eta}^2  -  I_{\eta} )^{-1}   ( I_{\eta}  + \nu I  )  
= I_{\eta}^{-2}  ( I  -  I_{\eta}^{-1} )^{-1}   ( I_{\eta}  + \nu I  ) 
\end{array} \end{equation}
as a bounded operator on $X$, it is necessary for 1 to be included in the resolvent set of $I_{\eta}^{-1}$.
The condition $1 \in \rho( I_{\eta}^{-1})$ is satisfied by the assumption: the inverse of $\eta(I_{\eta} - I)$ are assumed to exist as bounded on $X$.
\end{proof}

The operator $( I_{\eta}(t,s)  + \nu  I )$, whose inverse operator is not necessarily bounded, is the inverse of the resolvent operator of $I_{\eta}(t,s) $, here one can see a reason why the resolvent representation is doubly-imposed in the resulting logarithmic representation.
Note that the right hand side of Eq.~(\ref{logthm2}) plays a role of pre-infinitesimal generator being defined in Ref.~\cite{17iwata-1}.
A pre-infinitesimal generator is an operator possible to be an infinitesimal generator if an ideal domain is given. \\

\subsection{Logarithmic representation using the alternative infinitesimal generator}
The invertible assumption in Lemma~\ref{thm1} is removed by introducing the alternative infinitesimal generator~\cite{17iwata-3}.
Indeed, it is readily seen that $0$ and $1$ are included in the resolvent set of $I_{\eta}^{-1}$ by replacing $I_{\eta}$ with $e^{a(t,s)} - \nu  I$ for a certain sufficiently large $|\nu|$.
The logarithmic representation for infinitesimal generator of closed operator shown in Lemma~\ref{thm1} is generalized in the following Corollary.

\begin{theorem} \label{lem1}
Let $\partial_t a_i(t,s)$ be alternative infinitesimal generators ($i=1,2$) being defined by 
\begin{equation} \begin{array}{ll}
a_1(t,s) := {\rm Log} [ \eta (I_{\eta} (t,s) - I) ], \vspace{1.5mm}\\
a_2(t,s) := {\rm Log} [ I_{\eta}(t,s)  + \nu  I ],
\end{array} \end{equation} 
respectively.
Let $t$ and $s$ satisfy $0 \le s \le t \le T$, and $Y$ be dense subspace of $X$.
Let $U(t,s)$ be a continuous semigroup satisfying
\[ \begin{array}{ll}
 U(t,r) U(r,s) = U (t,s)  
\end{array} \]
for any $s < r < t$ included in a finite interval $[0, T]$
with its domain space
\[ 
D(U(t,s)) = \{ x \in D(U(r,s)); ~  U(r,s) x  \in  D(U(t,r))  \}.
\]
where  $D(U(t,s))$ is assumed to be a dense subspace of $X$.
Let $U(t,s)$ and its infinitesimal generator $A(t)$ be assumed to commute.
For a given two-parameter closed operator $U(t,s): D(U) \to X$, its infinitesimal generator $A(t)$ is represented by
\begin{equation} \label{logthm} \begin{array}{ll}
A(t)  = (I + \nu  e^{-a_1(t,s)}) \partial_t a_1(t,s)
-   (I + \nu  e^{-a_2(t,s)})  \partial_t  a_2(t,s), 
\end{array} \end{equation}
where $\nu$ and $\eta$ are certain complex number.
\end{theorem}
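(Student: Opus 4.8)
The plan is to obtain Eq.~(\ref{logthm}) by running the argument behind Lemma~\ref{thm1} with the ``alternative'' operators $a_1(t,s)$ and $a_2(t,s)$ in place of the resolvent combinations used there, exploiting the fact that the exponential of a bounded operator is automatically bounded \emph{and} boundedly invertible, so that the invertibility hypothesis of Lemma~\ref{thm1} becomes unnecessary.

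First I would record the algebraic links between the new variables and the old ones. For $\eta$ in the resolvent set of $U(t,s)$ and $|\nu|$ sufficiently large, the operators $\eta(I_{\eta}(t,s)-I)=U(t,s)I_{\eta}(t,s)$ and $I_{\eta}(t,s)+\nu I$ are bounded on $X$ with spectra enclosed by Riesz--Dunford contours avoiding the origin and the branch cut of ${\rm Log}$, whence
\[
e^{a_1(t,s)} = \eta\,(I_{\eta}(t,s)-I), \qquad e^{a_2(t,s)} = I_{\eta}(t,s)+\nu I .
\]
Consequently $\eta^{-1}(I_{\eta}(t,s)-I)^{-1}=e^{-a_1(t,s)}$ and $(I_{\eta}(t,s)+\nu I)^{-1}=e^{-a_2(t,s)}$ are bounded on $X$, which is precisely what forced the invertibility and boundedness requirements in Lemma~\ref{thm1} and Corollary~\ref{cor1}; here they come for free.

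Next I would start from the logarithmic factorization underlying Lemma~\ref{thm1}, namely $U(t,s)=\eta(I_{\eta}-I)\,I_{\eta}^{-1}$ and
\[
A(t) = \partial_t {\rm Log}\,[\,\eta(I_{\eta}-I)\,] - \partial_t {\rm Log}\,[\,I_{\eta}\,],
\]
which is legitimate because $A(t)$ commutes with $U(t,s)$, so that $I_{\eta}(t,s)$, its locally-strong $t$-derivative and all their Riesz--Dunford functions lie in a single commutative subalgebra and the operator logarithmic calculus (product rule, and the chain rule $\partial_t {\rm Log}\,V = V^{-1}\partial_t V$) applies verbatim. Rewriting each term through the translation device of Eq.~(\ref{detail}), $\partial_t {\rm Log}[W] = (I+\nu W^{-1})\,\partial_t {\rm Log}[W+\nu I]$, and then inserting the two identities above together with the alternative-infinitesimal-generator correspondence of \cite{17iwata-3}, turns the coefficients into $I+\nu e^{-a_1(t,s)}$ and $I+\nu e^{-a_2(t,s)}$ and the differentiated logarithms into $\partial_t a_1(t,s)$ and $\partial_t a_2(t,s)$; collecting the two pieces gives Eq.~(\ref{logthm}). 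The remaining routine checks are that the contours defining $a_1(t,s),a_2(t,s)$ exist for a suitable $\eta$ and large $|\nu|$, that $a_i(t,s)$ is differentiable in the locally-strong topology of \cite{20iwata-3}, and that $I+\nu e^{-a_i(t,s)}$ is bounded on $X$, so that the right-hand side of Eq.~(\ref{logthm}) is a legitimate pre-infinitesimal generator on the dense subspace $Y$.

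I expect the one genuine obstacle to be exactly the removal of the invertibility hypothesis, i.e. guaranteeing that $0$ and $1$ belong to the resolvent set of $I_{\eta}(t,s)^{-1}$ even when $U(t,s)$ is not invertible. This is what the reparametrization $I_{\eta}(t,s)=e^{a(t,s)}-\nu I$ of \cite{17iwata-3} is for: one takes the bounded, boundedly invertible operator $e^{a(t,s)}$ as the primitive object, carries out all manipulations inside $B(X)$ exactly as in the bounded theory of \cite{17iwata-1}, and only afterwards re-expresses the outcome in terms of $I_{\eta}(t,s)$. Everything else is the commutative operator logarithmic calculus already available for bounded evolution operators, now applied to $\eta(I_{\eta}(t,s)-I)$ and $I_{\eta}(t,s)+\nu I$ instead of to $U(t,s)$ directly.
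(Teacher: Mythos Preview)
Your proposal is correct and takes essentially the same approach as the paper: the paper's own proof is literally the one line ``It follows from Eq.~(\ref{detail}) and Theorem~\ref{thm1}'' (i.e.\ Lemma~\ref{thm1}), and you have simply unpacked that reference---starting from $A(t)=\partial_t{\rm Log}[\eta(I_\eta-I)]-\partial_t{\rm Log}[I_\eta]$, applying the translation device of Eq.~(\ref{detail}), and then recognizing the coefficients as $I+\nu e^{-a_i(t,s)}$ via $e^{a_1}=\eta(I_\eta-I)$, $e^{a_2}=I_\eta+\nu I$. Your explicit remark that the bounded invertibility of $e^{\pm a_i(t,s)}$ is automatic, hence the invertibility hypothesis of Lemma~\ref{thm1} becomes superfluous, is exactly the point the paper makes in the paragraph following the theorem.
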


\begin{proof}
It follows from Eq.~(\ref{detail}) and Theorem~\ref{thm1}.
\end{proof}

The relation $I_{\eta}(t,s) = e^{a_1(t,s)} - \nu  I$ leads to the boundedness 
\[
 \| e^{a_1(t,s)} \| \le \| I_{\eta} \| + \nu. \] 
Simultaneously its inverse $e^{-a_1(t,s)} =  (I_{\eta}  + \nu I)^{-1}$ is bounded, since $\nu$ is assumed to be included in the resolvent set of $I_{\eta}$. 
In conclusion, operators $a_1(t,s)$ and $a_2(t,s)$, which are well-defined by properly chosen $\nu$ in Eq.~(\ref{detail}), are bounded.
For the representation (\ref{logthm}), it is not necessary to assume the invertible properties of $U(t,s)$ and $\eta(I_{\eta}(t,s) - I)$.
This is a different point compared to the original Lemma~\ref{thm1}.

\begin{corollary} \label{lem2}
Let $\partial_t a(t,s)$ be an alternative infinitesimal generator being defined by 
\begin{equation}
a(t,s) := {\rm Log} [ I_{\eta}(t,s)  + \nu  I ].
\end{equation} 
Let $t$ and $s$ satisfy $0 \le s \le t \le T$, and $Y$ be dense subspace of $X$.
Let $U(t,s)$ be a continuous semigroup satisfying
\[ \begin{array}{ll}
 U(t,r) U(r,s) = U (t,s)  
\end{array} \]
for any $s < r < t$ included in a finite interval $[0, T]$
with its domain space
\[ 
D(U(t,s)) = \{ x \in D(U(r,s)); ~  U(r,s) x  \in  D(U(t,r))  \}.
\]
where  $D(U(t,s))$ is assumed to be a dense subspace of $X$.
Let $U(t,s)$ and its infinitesimal generator $A(t)$ be assumed to commute.
In particular, $\eta/(1-\eta)$ is assumed to be included in the resolvent set of $I_{\eta}$ and $\eta (I_{\eta} - I)$.
For a given two-parameter closed operator $U(t,s): D(U) \to X$, its infinitesimal generator $A(t)$ is represented by
\begin{equation} \label{loglem} \begin{array}{ll} 
A(t)    =      \left( e^{a(t,s)} - (2 \nu + 1) I  + ( \nu^2   + \nu)   e^{-a(t,s)}   \right)^{-1} \partial_t a(t,s),
\end{array} \end{equation}
where $\nu$ and $\eta$ are certain complex numbers satisfying $\nu   = \eta/(1  - \eta)$.
\end{corollary}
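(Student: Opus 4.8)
The plan is to obtain Eq.~(\ref{loglem}) from the single-logarithm representation already established in Corollary~\ref{cor1} (equivalently, from Theorem~\ref{lem1}) by the change of operator variable $e^{a(t,s)} = I_\eta(t,s) + \nu I$. Throughout, write $a := a(t,s)$ and $I_\eta := I_\eta(t,s)$. The key point is that, with $\nu = \eta/(1-\eta)$ as in Eq.~(\ref{equival}), the operator $I_\eta + \nu I$ is boundedly invertible under the stated resolvent-set hypotheses, so that $a = {\rm Log}[I_\eta + \nu I]$ is a genuine bounded operator, $e^{-a} = (I_\eta + \nu I)^{-1}$ is bounded, and $\partial_t {\rm Log}[I_\eta + \nu I] = \partial_t a$ in the locally-strong topology used throughout. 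Recall also the scalar identity $(\nu-\eta)/\eta = \nu$, valid precisely when $\nu = \eta/(1-\eta)$, which is what collapses the two logarithmic terms of Lemma~\ref{thm1} onto the single operator $a$ and underlies Corollary~\ref{cor1}.

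First I would invoke Corollary~\ref{cor1} to write
\[ A(t) = ( I_{\eta}^2 - I_{\eta} )^{-1} ( I_{\eta} + \nu I ) \, \partial_t a, \]
and then substitute $I_\eta = e^{a} - \nu I$. Since every operator in play is a polynomial in the single bounded operator $I_\eta$ together with the bounded inverse $e^{-a}$, all of them commute, and a direct expansion gives
\[ I_{\eta}^2 - I_{\eta} = e^{2a} - (2\nu+1) e^{a} + (\nu^2 + \nu) I = e^{a}\bigl( e^{a} - (2\nu+1) I + (\nu^2 + \nu) e^{-a} \bigr). \]
Inverting this product and using $I_\eta + \nu I = e^{a}$ then yields
\[ ( I_{\eta}^2 - I_{\eta} )^{-1} ( I_{\eta} + \nu I ) = \bigl( e^{a} - (2\nu+1) I + (\nu^2 + \nu) e^{-a} \bigr)^{-1}, \]
and substituting back into the expression for $A(t)$ produces Eq.~(\ref{loglem}).

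The step I expect to be the main obstacle is not this algebra but the operator-theoretic bookkeeping around the inverse on the right-hand side of Eq.~(\ref{loglem}). One must check that $\bigl( e^{a} - (2\nu+1) I + (\nu^2 + \nu) e^{-a} \bigr)^{-1}$ exists as precisely the (generally unbounded) pre-infinitesimal generator appearing in Corollary~\ref{cor1}: this reduces, exactly as in the proof of Corollary~\ref{cor1}, to the condition $1 \in \rho(I_\eta^{-1})$, which is furnished by the resolvent-set hypotheses, and to confirming that left-multiplying by $e^{-a}$ and commuting it past $( I_{\eta}^2 - I_{\eta} )^{-1}$ alters no domain --- immediate, since $e^{a}$ and $e^{-a}$ are mutually inverse bounded operators commuting with every function of $I_\eta$. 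If one wishes to dispense with any invertibility assumption on $U(t,s)$ itself, the same computation can be run starting from Theorem~\ref{lem1}: with $\nu = \eta/(1-\eta)$ the two alternative generators obey $e^{a_1} = \eta\, e^{a_2} - \nu I$ with $a_2 = a$, so $\partial_t a_1$ is expressed through $\partial_t a$, and the operator coefficient $(I + \nu e^{-a_1})\partial_t a_1 - (I + \nu e^{-a_2})\partial_t a_2$ simplifies --- after the same factorization --- to $\bigl( e^{a} - (2\nu+1) I + (\nu^2 + \nu) e^{-a} \bigr)^{-1}\partial_t a$.
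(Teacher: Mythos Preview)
Your proposal is correct and follows essentially the same route as the paper: introduce the alternative generator $a(t,s)={\rm Log}[I_\eta(t,s)+\nu I]$, note that $a$ is bounded so that $e^{\pm a}$ are simultaneously defined, and substitute $I_\eta=e^{a}-\nu I$ into the single-term representation obtained under the constraint $\nu=\eta/(1-\eta)$. The paper's own proof is considerably terser---it cites Lemma~\ref{thm1} and simply asserts that the substitution yields Eq.~(\ref{loglem})---whereas you supply the explicit factorization $I_\eta^2-I_\eta=e^{a}\bigl(e^{a}-(2\nu+1)I+(\nu^2+\nu)e^{-a}\bigr)$ and the cancellation against $I_\eta+\nu I=e^{a}$, which is exactly the computation the paper leaves implicit.
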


\begin{proof}
Following the previous research~\cite{17iwata-3}, it is practical to define the alternative infinitesimal generator $\partial_t a(t,s)$ with the relation $a(t,s) = {\rm Log} [ I_{\eta}(t,s)  + \nu  I ]$.
Since $a(t,s)$ is a bounded operator on $X$, there is no restriction to define both $e^{a(t,s)}$ and $e^{-a(t,s)}$ simultaneously. 
The logarithmic representation of the infinitesimal generators shown in Lemma~\ref{thm1} is replaced using the alternative infinitesimal generator.
\[ \begin{array}{ll}
 A(t) 
   =      \left( e^{a(t,s)} - (2 \nu + 1) I  + ( \nu^2   + \nu)   e^{-a(t,s)}   \right)^{-1} 
  \partial_t a(t,s)
\end{array} \]
follows, where $\partial_t a(t,s)$ is the infinitesimal generator of
\begin{equation} \begin{array}{ll}
e^{a(t,s)} =  I_{\eta}(t,s)  + \nu  I = (I-\eta^{-1}U(t,s))^{-1} \{ (\nu+1) I- \nu \eta^{-1}U(t,s)) \},
\end{array} \end{equation}
so that the infinitesimal generator of the inverse of doubly-implemented resolvent operator $ I_{\eta}(t,s)  + \nu  I $.
 \end{proof}
 
Operators $a(t,s)$, which are well-defined by properly chosen $\nu$ in Eq.~(\ref{detail}), are bounded.
For the representation (\ref{loglem}), it is not necessary to assume the invertible properties of $U(t,s)$ and $\eta(I_{\eta}(t,s) - I)$.
This is a different point compared to the original Corollary~\ref{cor1}. \\

\subsection{Algebraic property}
Under the well-definedness of $a(t,s) := {\rm Log} [ I_{\eta}(t,s)  + \nu  I ]$, there are three conditions to be satisfied for $A(t)$ to be an element of $B(X)$-module~\cite{18iwata, 20iwata-2}:
\begin{enumerate}
\item boundedness of $I + \nu e^{-a_1(t,s)}$ and  $I + \nu e^{-a_2(t,s)}$ in Eq.~(\ref{logthm});  \\
\item continuity of $\nu e^{-a_1(t,s)}$ and  $\nu e^{-a_2(t,s)}$ with respect to $t$ and $s$; \\
\item commutation between $e^{-a_i(t,s)}$ and $\partial_t a_i(t,s)$ respectively for $i=1,2$.  
\end{enumerate}
It is remarkable that parts $\nu e^{-a_1(t,s)}$ and $\nu e^{-a_2(t,s)}$ are always bounded on $X$, as $e^{a_i(t,s)}$ and $e^{-a_i(t,s)}$ has been clarified to be bounded on $X$.
Furthermore this part is continuous with respect to $t$ and $s$, as $U(t,s)$ is assumed to be continuous with respect to $t$ and $s$. 
Indeed it is sufficient to see that \vspace{1.5mm}  \\
\qquad continuity of $U(t,s)$ 
\quad $\Rightarrow$  \quad  continuity of $I_{\eta}(t,s)$  
\quad $\Rightarrow$  \quad  continuity of $a(t,s)$ \vspace{1.5mm} \\
is true for both $t$ and $s$, where $\eta \in {\mathbf C}$ is taken from the resolvent set of $U(t,s)$.
For the commutation, since the commutation between $U(t,s)$ and $I_{\lambda}(t,s)$ is always true, \vspace{1.5mm} \\
\qquad commutation: $U(t,s)$ and $A(t)=\partial_t U(t,s)$ \vspace{1.5mm} \\
 \qquad \qquad $\Rightarrow$  \quad   commutation: $I_{\lambda}(t,s)$ and $\partial_t I_{\lambda}(t,s)$  \vspace{1.5mm} \\
 \qquad \qquad $\Rightarrow$  \quad   commutation: $I_{\lambda}(t,s)$ and $\partial_t a(t,s)$, \vspace{1.5mm} \\
is satisfied, where the commutation between $U(t,s)$ and $A(t)$ is assumed (cf. assumption of Lemma~\ref{thm1} and Theorem~\ref{lem1}). 
Consequently, the infinitesimal generators of unbounded evolution operators shown in Eq.~(\ref{logthm}) are clarified to be elements of $B(X)$-module; i.e. a module over the Banach algebra $B(X)$.
This is a generalization of original logarithmic representation in the sense of extending the applicable evolution operators. \\

\subsection{Relativistic formulation}
In terms of applying to nonlinear transform such as the Cole-Hopf transform\cite{51cole,50hopf}, it is useful to introduce the relativistic form of the obtained logarithmic representation \cite{19iwata-1}.
The relativistic formulation is represented using the tensor notation.

\begin{corollary} 
Let $n$ be a positive integer, and $i$ be the evolution direction ($1 \le i \le n$).
Let $ x^i$ and $\xi^i$ satisfy $-L_ \le x^i, \xi^i \le L$, and $Y$ be dense subspace of $X$.
Let $U(x^i,\xi^i)$ be a continuous semigroup satisfying
\[ \begin{array}{ll}
 U(x^i,\eta^i) U(\eta^i,\xi^i) = U (x^i,\xi^i) 
\end{array} \]
for any $\xi^i < \eta^i < x^i$ included in a finite interval $[-L, L]$
with its domain space
\[ 
D(x^i,\xi^i) = \{ v \in D(U(\eta^i, \xi^i)); ~  U(\eta^i,\xi^i) v \in  D(U(x^i,\eta^i))  \}.
\]
where  $D(U(x^i,\xi^i))$ is assumed to be a dense subspace of $X$.
Let $U(x^i,\xi^i)$ and its infinitesimal generator $A(t)$ be assumed to commute.

For a given two-parameter closed operator $U(x^i,\xi^i): D(U) \to X$, its infinitesimal generator $K(x^i)$ is represented by
\begin{equation}  \begin{array}{ll}
K(x^i)  = (I + \nu  e^{-a_1(x^i,\xi^i)}) \partial_t a_1(x^i,\xi^i)
-   (I + \nu  e^{-a_2(x^i,\xi^i)})  \partial_t  a_2(x^i,\xi^i),
\end{array} \end{equation}
where $\nu$ and $\eta$ are certain complex numbers satisfying $\nu   = \frac {\eta}{1  - \eta}$, and $a_i(x^i,\xi^i)$ is an alternative infinitesimal generator being defined by
\begin{equation} \begin{array}{ll}
a_1(t,s) := {\rm Log} [ \eta (I_{\eta}(t,s)  - I) ], \vspace{1.5mm}\\
a_2(t,s) := {\rm Log} [ I_{\eta}(t,s)  + \nu  I ].
\end{array} \end{equation}  
In particular, if $\eta/(1-\eta)$ is assumed to be included in the resolvent set of $I_{\eta}$ and $\eta (I_{\eta} - I)$.
\begin{equation} \begin{array}{ll}
K(x^i)    =      \left( e^{a_1(x^i,\xi^i)} - (2 \nu + 1) I  + ( \nu^2   + \nu)   e^{-a_1(x^i,\xi^i)}   \right)^{-1} \partial_{x^i} a_1(x^i,\xi^i)
\end{array} \end{equation}
is true.
\end{corollary}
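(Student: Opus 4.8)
The plan is to obtain the relativistic formulation as a direct specialization of Theorem~\ref{lem1} and Corollary~\ref{lem2}, after identifying the one-dimensional ``evolution direction'' $x^i$ with the time variable $t$ and $\xi^i$ with $s$. All the remaining coordinates $x^j$ with $j \neq i$ enter only as fixed parameters, so that the abstract two-parameter semigroup theory developed above applies verbatim along the $i$-th direction; in particular $\partial_t$ in the relativistic display is read as the directional derivative $\partial_{x^i}$, and $\eta^i$ in the composition law plays the role of the intermediate time $r$.

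First I would check that the hypotheses listed in the corollary are exactly those of Theorem~\ref{lem1} under the substitution $t \mapsto x^i$, $s \mapsto \xi^i$, $T \mapsto L$, $r \mapsto \eta^i$: the semigroup law $U(x^i,\eta^i)U(\eta^i,\xi^i) = U(x^i,\xi^i)$, the nesting identity for the domains $D(U(x^i,\xi^i))$, the density of $D(U(x^i,\xi^i))$ in $X$, and the commutation of $U(x^i,\xi^i)$ with its infinitesimal generator along $x^i$. Since $\eta$ is taken from the resolvent set of $U(x^i,\xi^i)$, the doubly-implemented resolvent $I_{\eta}(x^i,\xi^i) := (I - \eta^{-1}U(x^i,\xi^i))^{-1}$ together with the shifted operators $\eta(I_{\eta}-I)$ and $I_{\eta}+\nu I$ are bounded on $X$ for $|\nu|$ sufficiently large, so that $a_1(x^i,\xi^i) := {\rm Log}[\eta(I_{\eta}(x^i,\xi^i)-I)]$ and $a_2(x^i,\xi^i) := {\rm Log}[I_{\eta}(x^i,\xi^i)+\nu I]$ are well-defined bounded operators exactly as in Eq.~(\ref{detail}). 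Theorem~\ref{lem1} then produces the first displayed representation of $K(x^i)$ without any further argument.

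Second, for the reduced single-logarithm form I would impose the additional spectral condition $\eta/(1-\eta) \in \rho(I_{\eta}(x^i,\xi^i)) \cap \rho(\eta(I_{\eta}(x^i,\xi^i)-I))$, set $\nu = \eta/(1-\eta)$, and invoke Corollary~\ref{lem2} (equivalently Corollary~\ref{cor1} combined with the alternative-generator device of Theorem~\ref{lem1}): the operator difference $(I + \nu\eta^{-1}(I_{\eta}-I)^{-1}) - (I + \nu I_{\eta}^{-1})$ collapses, precisely as in the proof of Corollary~\ref{cor1}, so that $K(x^i)$ acquires the single bounded prefactor $\bigl(e^{a_1(x^i,\xi^i)} - (2\nu+1)I + (\nu^2+\nu)e^{-a_1(x^i,\xi^i)}\bigr)^{-1}$ multiplying $\partial_{x^i} a_1(x^i,\xi^i)$, which is the asserted expression. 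Boundedness of this inverse follows, as before, from $1 \in \rho(I_{\eta}^{-1})$, which is built into the assumed boundedness of $(\eta(I_{\eta}-I))^{-1}$.

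The only point that demands care, rather than a genuine obstacle, is the bookkeeping of the tensor notation: the statement writes $a_1, a_2$ as functions of $(t,s)$ while indexing $K$ by $x^i$ and writing $\partial_t$ in the final display, so I would fix the dictionary $t \equiv x^i$, $s \equiv \xi^i$, $\partial_t \equiv \partial_{x^i}$ at the outset and note that each evolution direction $i$ with $1 \le i \le n$ is handled independently of the others. Once this is made explicit, the relativistic formulation is nothing but Theorem~\ref{lem1} and Corollary~\ref{lem2} transcribed into tensor notation, and no new estimate or construction is required.
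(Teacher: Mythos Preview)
Your proposal is correct and matches the paper's approach exactly: the paper's proof is the single line ``The statement follows from Theorem~\ref{lem1} and Corollary~\ref{lem2},'' and you have simply unpacked that citation by making the variable substitution $t\mapsto x^i$, $s\mapsto\xi^i$ explicit. One small slip: in your second paragraph you justify the boundedness of the prefactor's inverse by appealing to ``the assumed boundedness of $(\eta(I_\eta-I))^{-1}$,'' but neither the relativistic corollary nor Corollary~\ref{lem2} carries that hypothesis---the point of the alternative-generator formulation is precisely that it is not needed---so you should drop that parenthetical and rely on Corollary~\ref{lem2} as stated.
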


\begin{proof}
The statement follows from Theorem~\ref{lem1} and Corollary~\ref{lem2}.
\end{proof}

In this formulation the evolution direction $x^i$ ($i$-th direction in the tensor form) can be either $t$ or others.

\section{Logarithm of strip-type operator} \label{sec-iden}
Let a sectorial operator ${\mathcal U}$ be an injective operators in a Banach space $X$. 
The strip-type operator \cite{06hasse} is associated with the logarithm of sectorial operators; indeed, the logarithm of sectorial operator ${\mathcal U}$ is a strong strip-type operator ${\mathcal A} =  {\rm Log}  ~ {\mathcal U}$ with a finite height.
The strip-type operators do not satisfy the sectorial property if they are unbounded, while the logarithm of strip-type operators can be always well-defined by the present method. 
That is, the logarithm of a certain class of stripe-type operator is the logarithm of logarithm:
\[ {\rm Log} {\mathcal A} : =  {\rm Log} ( {\rm Log} ~ {\mathcal U}),  \]
which can be defined by the present method, where ${\mathcal U}$ is a sectorial operator, but ${\mathcal A}$ is not necessarily a sectorial operator. 
This equation is re-written as
\[ e^{ {\rm Log} {\mathcal A} } =  {\rm Log} ~ {\mathcal U},  \]
if it is possible to define the exponential of both sides.
In some cases an operator $ {\rm Log} ~ {\mathcal U} $ corresponding to the infinitesimal generator of $ {\mathcal U} $ is formally identified with the evolution operator $e^{ {\rm Log} {\mathcal A} }$.

This situation is applicable to the Cole-Hopf transform
\[  {\mathcal A} = -2  \mu^{-1/2} \partial_x \log {\mathcal U} 
  = -2  \mu^{-1/2}     {\mathcal U}^{-1}  \partial_x {\mathcal U} ,
  \]
which is equivalent to a differential equation
\[ 
   { \partial_x {\mathcal U} }  = - ( \mu^{1/2} /2) {\mathcal A} ~  {\mathcal U},
  \]
where it is clear from this relation that $ {\mathcal A} $ and  $ {\mathcal U} $ play roles of infinitesimal generator and the evolution operator, respectively. 
On the other hand, according to the original meaning of the Cole-Hopf transform, $ {\mathcal A} $ and $ {\mathcal U} $ are the solutions of Burgers equation and heat equation, respectively.
Therefore it is necessary to take an infinitesimal generator $ {\mathcal A} $ as an evolution operator (a solution of differential equation) in this context, and then the possibility of defining another evolution operator $ e^{ {\rm Log} {\mathcal A} } $ settle the problem.
It leads to the abstract and closed (or self-consistent) framework of nonlinear transform, which appears in soliton theory.

\section{Summary}
In the application to soliton theory, it is practical to consider that the evolution operator $U(t,s)$ to be an infinitesimal generator of another equation.
For this purpose, it is often necessary to have a certain kind of identification between infinitesimal generators and evolution operators (Sec.~\ref{sec-iden}). 
This situation is often found in the theory of integrable systems such as soliton theory (for example, see \cite{73scott}).
The general framework for identifying the evolution operators (i.e. solutions) with the infinitesimal generators of another equations (an operator included in an equation) is established in this paper.
By focusing on the  logarithmic relation, the representation is obtained only using the Riesz-Dunford integral of resolvent operators.
From an algebraic point of view, the logarithmic representation shown in the presented paper is included in the $B(X)$-module.

In conclusion, the standard theory of abstract evolution equations itself is generalized in the sense of weakening the assumptions for evolution operators, and the theory for logarithm of operators is improved in the sense of removing the sectorial assumption.
The present framework of treating the logarithmic representation of generally-unbounded two parameter semigroup (evolution operator) will open up a way to analyze
\begin{itemize}
\item abstract and general treatments to linear and nonlinear transforms; 
\item explosions of solutions at a finite time;
\item some stochastic differential equations without assuming the bounded time evolution;
\end{itemize}
where note that two parameter semigroup $U(t,s)$ can be either linear or nonlinear semigroups.
It is sometimes possible to remove the bounded time interval $0 \le t,s \le T$ or $-L \le x^i , \xi^i \le L$ assumptions in main results, because it is originally assumed for ensuring the boundedness of operator \cite{17iwata-1}; e.g. the boundedness of $I_{\eta}(t,s)$ in the present paper.

\end{document}